\newtheorem{thm}{Theorem}[]
\newtheorem*{thm*}{Theorem}
\newtheorem{lem}[thm]{Lemma}
\newtheorem{ques}[thm]{Question}
\newtheorem{claim}[thm]{Claim}
\newtheorem{rem}[thm]{Remark}
\newcommand{\co}{\colon\thinspace}
\newcommand{\param}{{\mathchoice{\mkern1mu\mbox{\raise2.2pt\hbox{$
\centerdot$}}
\mkern1mu}{\mkern1mu\mbox{\raise2.2pt\hbox{$\centerdot$}}\mkern1mu}{
\mkern1.5mu\centerdot\mkern1.5mu}{\mkern1.5mu\centerdot\mkern1.5mu}}}
\begin{document}
\title[Lifts of simple curves]{Lifts of simple curves in 
finite regular coverings of closed surfaces}
\author{Ingrid Irmer}
\address{Mathematics Department\\
Technion, Israel Institute of Technology\\
Haifa, 32000\\
Israel 
}
\email{ingridi@technion.ac.il}
\date{21 November, 2017}

\begin{abstract}
Suppose $S$ is a closed orientable surface and $\tilde{S}$ is a finite sheeted regular cover of $S$. When studying mapping class groups, the following question arose: Do the lifts of simple curves from $S$ generate $H_{1}(\tilde{S},\mathbb{Z})$? A family of examples is given for which the answer is ``no''.
\end{abstract}

\maketitle

{\footnotesize
\tableofcontents
}

\section{Introduction}
\label{sect:intro}

Let $S$ be a genus $g$ closed, orientable surface with base point and no boundary. Fix $p\co \tilde{S} \to S$, a finite-sheeted regular covering
of $S$, where $\tilde S$ is connected. The \emph{simple curve homology of $p$} (denoted by $sc_{p}(H_{1}(\tilde{S};\mathbb{Z})$) is the span of $[\tilde{\gamma}]$ in $H_1(\tilde{S};\mathbb{Z})$ such that $\tilde{\gamma}$ is a connected component of $p^{-1}(\gamma)$ and $\gamma$ a simple closed curve in $S$.\\

Recall that the Torelli group of a surface is the subgroup of the 
mapping class group that consists of surface diffeomorphisms that act
trivially on homology with integer coefficients. 
A survey of the Torelli group can be found in \cite{Johnson}. 
The following question was posed by Julien March\'{e} on Mathoverflow, \cite{MO}, and arose while studying the ergodicity of the action of the Torelli group on 
$\mathrm{SU}(2)$-character varieties of surfaces, \cite{Marche}:

\begin{ques}[see \cite{MO}]
\label{Mainquestion}
\begin{enumerate}
\item Does $sc_{p}(H_{1}(\tilde{S};\mathbb{Z})) =H_1(\tilde{S},\mathbb{Z})$?
\item If not, how can we characterize the submodule $sc_{p}(H_{1}(\tilde{S};\mathbb{Z}))$?
\end{enumerate}
\end{ques}

Since writing the first version of this paper, there has been much progress on answering Question \ref{Mainquestion} and related questions. By using quantum $\mathrm{SO}(3)$-representations coming from TQFT, the authors of \cite{Quantumreps} show that equality in part (i) of Question \ref{Mainquestion} does not always hold. However, such covers are not explicitly constructed 
in~\cite{Quantumreps}.\\

Denote by $Mod(\Sigma)$ the mapping class group of a surface $\Sigma$. Unlike the surface $S$, $\Sigma$ is not assumed to have empty boundary. When
$\Sigma$ has $p$ punctures, genus $g$ and $n$ boundary components, we will
sometimes denote it by $\Sigma^{p}_{g,n}$. \\

We now explain a relation between a rational coefficient version of 
Question~\ref{Mainquestion} 
\begin{equation}
\label{eq.QQ}
sc_{p}(H_{1}(\tilde{S};\mathbb{Z}))\otimes_{\mathbb{Z}}
\mathbb{Q} =H_1(\tilde{S},\mathbb{Q})
\end{equation}
and the Ivanov Conjecture~\cite{Ivanov}. The latter states that
$H_1(\Gamma,\mathbb{Q})=0$ for any finite index subgroup $\Gamma$ of 
$Mod(\Sigma^{p}_{g,n})$ with $g\geq 3$. \\

Fix a finite-sheeted regular cover $p\co \tilde{\Sigma} \to \Sigma$ of 
$\Sigma$, and let $\Gamma_p$ denote the subgroup of $Mod(\Sigma)$ generated
by all $\phi \in Mod(\Sigma)$ such that $\phi$ lifts to $\tilde{\phi}
\in Mod(\tilde \Sigma)$. Let $\tilde \Gamma_p$ denote the subgroup of
$Mod(\tilde \Sigma)$ generated by $\tilde \phi$ as before. \\

Boggi-Looijenga~\cite{BLconversation} observe that when~\eqref{eq.QQ} holds,
the $\tilde\Gamma$ invariant submodule of $H_{1}(\tilde{\Sigma};\mathbb{Q})$ 
is trivial. If that happens for all finite regular covers $p$ of $\Sigma$,
then Theorem C of \cite{Putman2} implies Ivanov's Conjecture for mapping 
class groups of surfaces with genus $g+1$, $n-1$ boundary components and 
$p$ punctures. A further discussion to the background of this question with 
integer and rational coefficients can be found in Section 8 of \cite{FH}.\\

For surfaces with nonempty boundary or at least one puncture, 
Farb-Hensel~\cite{FH} provide a representation theoretic framework 
in which to study part (ii) of Question \ref{Mainquestion} with integer
or rational coefficients. For punctured surfaces, it was shown in a 
recent paper of Malestein-Putman~\cite{MalesteinPutman} that 
Question~\ref{Mainquestion} fails. \\

Our main theorem is an explicit family of counterexamples to 
Question~\ref{Mainquestion}. These are iterated homology surface coverings,
with at least two iterations, where the last one uses an integer $m \geq 3$.

\begin{thm}
\label{thm.1}
For the examples of Section~\ref{sub:non_example}, equality in 
part (i) of Question~\ref{Mainquestion} is false.
\end{thm} 

The intuition that lifts of simple curves should generate homology possibly stems from the fact that counterexamples should be expected to have large genus; ``small'' genus coverings disproportionately satisfy a plethora of conditions that guarantee this. For example, it seems to be well-known that when the deck transformation group is Abelian, $sc_{p}(H_{1}(\tilde{S};\mathbb{Z})) =H_1(\tilde{S},\mathbb{Z})$. (As was explained to the author by Marco Boggi, \cite{Boggiemail}, this claim follows from arguments of Boggi-Looijenga and \cite{Looijenga} with rational and hence integral coefficients. It was proven directly for integral coefficients in an earlier version of this paper. For fundamental groups of
surfaces with nonempty boundary or at least one puncture and 
complex coefficients, this claim is Proposition 3.1 of \cite{FH}.) \\

\textbf{Organisation of Paper.} Section~\ref{sect:background} recalls and
provides some background and useful notation. Section~\ref{sect:example} 
studies a family of covering spaces in detail. These covering spaces are compositions of well known covering spaces, and the author makes no claims of originality in this section. The properties of the covering spaces that will be needed are simple and elementary, and hence are proven directly for completeness. The same is true for Subsection \ref{relations}, in which it is explained how to obtain spanning sets for homology of covers using relations in the deck transformation group. These results and examples are used in Subsection \ref{rationalvsintegral} to highlight differences between integral and rational homology, and in Subsection \ref{sub:non_example} to construct examples for which $sc_{p}(H_{1}(\tilde{S};\mathbb{Z}))$ is a proper submodule of $H_{1}(\tilde{S};\mathbb{Z})$. 

\subsection*{Acknowledgments} 

As noted above, the author became aware of this question on MathOverflow, and 
is grateful to Juli\'{e}n March\'{e} for posting the question, and the 
subsequent discussion by Richard Kent and Ian Agol. Mustafa Korkmaz and 
Sebastian Hensel pointed out an error in the first formulation of this paper. 
This paper was greatly improved as a result of communication with Marco Boggi,
Stavros Garoufalidis, Neil Hoffman, Thomas Koberda, Eduard Looijenga, 
Andrew Putman, and the detailed comments of the anonymous referee. 
The author is also grateful to Ferruh \"Ozbudak for a fascinating discussion 
on similar methods in coding theory. This research was funded by a 
T\"{u}bitak Research Fellowship 2216 and thanks the University of Melbourne 
for its hospitality during the initial and final stages of this project. 


\section{Assumptions and background}
\label{sect:background}

Since the 2-sphere is simply connected, hence has no nontrivial covers,
Question~\ref{Mainquestion} holds for genus 0 surfaces. Moreover, 
as pointed out by Ian Agol in \cite{MO}, part (i) of 
Question~\ref{Mainquestion} holds for genus 1 surfaces.
Therefore we only need consider closed surfaces $S$ of genus at least two. 
\\

\textbf{Curves and intersection numbers.} By a \emph{curve} in $S$ we mean
the free homotopy class of the image of a smooth map (which can be
taken to be an immersion) of $S^1$ into $S$. In this convention, a curve is necessarily closed and connected. A curve $\gamma$ is said to be \textit{simple} if the free homotopy class contains an embedding of $S^1$ in $S$. \\

When it is necessary to work with based curves, the assumption will often be made that wherever necessary, a representative of the free homotopy class is conjugated by an arc, to obtain a curve passing through the base point. The counterexamples constructed in this paper are obtained by iterated Abelian covers. When analysing how a curve lifts, it will therefore only be necessary to know the homology class of the curve in the intermediate cover in question.\\

If $c$ is a curve in a surface $S$, then $[c] \in H_1(S,\mathbb{Z})$ will
denote the corresponding homology class. \\

\textbf{$d$-lifts.} Given a finite sheeted regular covering $p\co \tilde{S} \to S$, the deck transformation group is denoted by $D$. For an
element $\gamma \in \pi_1(S)$, let $d=d(\gamma)$ denote the smallest natural
number for which $\gamma^d \in \pi_1(\tilde{S}) \subset \pi_1(S)$. Note that
$d$ exists and $d \leq |D|$, where $|D|$ is the number of elements of $D$. 
In that case, we will say that $\gamma$ $d$-lifts. \\

\textbf{Primitivity. }A homology class $h$ is \textit{primitive} if it is nontrivial and there does not exist an integer $k>1$ and a homology class $h_{prim}$ such that $h=kh_{prim}$. \\


\section{Homology coverings of a surface}
\label{sect:example}

In this section we recall the definition of a homology covering
of a surface and its basic properties. Our goal is to show that 
iterated homology coverings give a counterexample to part (i) of
Question~\ref{Mainquestion}. 

\subsection{Definition of a homology covering}
\label{sub.homcover}

We begin by recalling the well-known homology covers of a closed genus
$g$ surface $S$. Fix a natural number $m$ and let $p: \tilde \Sigma \to S$
denote the covering space of $S$ corresponding to the epimorphism
$\phi: \pi_1(S) \to H_1(S,\mathbb{Z}/m \mathbb{Z})$ is given by the composition
\begin{equation}
\label{eq.phi}
\pi_1(S) \to H_1(S,\mathbb{Z}) \to H_1(S,\mathbb{Z}/m \mathbb{Z})
\end{equation}
of the Hurewitz homomorphism with the reduction of homology modulo $m$. Let 
$D \simeq (\mathbb{Z}/m \mathbb{Z})^{2g}$ denote the
deck transformation group of $\phi$. These coverings are known as the 
\textit{mod-$m$-homology coverings} of $S$. 
Using an Euler characteristic argument, it follows that the genus of 
$\tilde{S}$ is $m^{2g}(g-1)+1$. Homology coverings are characteristic in
the following sense: the subgroup $\pi_1(\tilde S)$ of $\pi_1(S)$ 
is invariant under all surface diffeomorphisms of $S$. \\

\subsection{Homology coverings with $g=2$}
\label{sub.g=2}

When $g=2$, we can give an explicit description of the covering $\tilde S$
as follows. Write $S=t_1 \cup t_2$ where $t_i$ for $i=1,2$ are genus 
1 subsurfaces of $S$ with one boundary component. The pre-image of each $t_i$
under $\phi$ consists of $m^2$ copies of an $m^{2}$-holed torus, each
of which is an $m^{2}$-fold cover of $t_i$, as
illustrated in Figure \ref{holytorus}. \\

\begin{figure}[!htpb]
\centering
\includegraphics[height=0.15\textheight]{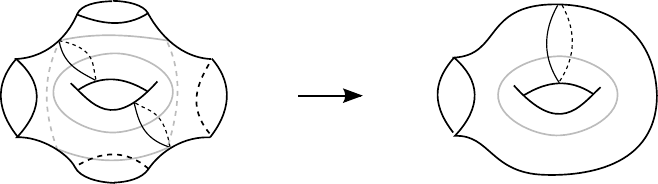}
\caption{
The pre-image of a 1-holed torus under the covering when $m=2$.}
\label{holytorus}
\end{figure}

The pre-images $\phi^{-1}(t_i)$ for $i=1,2$ are glued together as follows.
Let $K_{m^2,m^2}$ be the bi-partite graph with vertices the connected 
components of $\phi^{-1}(t_i)$ for $i=1,2$. Each connected component of
$\phi^{-1}(t_1)$ is glued to each connected component of $\phi^{-1}(t_2)$ along
a boundary curve, and this is represented by an edge of $K_{m^2,m^2}$. 
This is illustrated in Figure \ref{cube} for $m=2$, where the graph 
$K_{m^{2},m^{2}}$ is shown in grey. \\

\begin{figure}[!htpb]
\centering
\includegraphics[height=0.30\textheight]{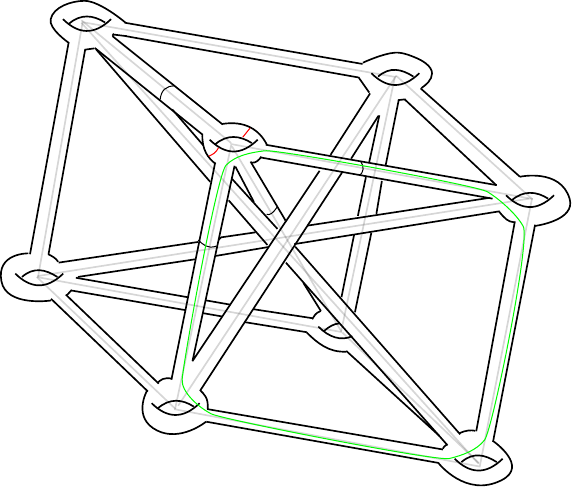}
\caption{The covering space $\tilde{S}$ for $m=2$. 
The red curves are some connected components of pre-images of the 
generator $a_1$ of $\pi_{1}(S)$. The green curve is a connected component 
of the lift of $b_{1}b_{2}$, and the black curves are connected components 
of the lift of $[a_{1},b_{1}]$.}
\label{cube}
\end{figure}

This subsection is now concluded with a useful lemma.

\begin{lem}
\label{nullnonsimple}
Let $\tilde{S}\rightarrow S$ be the homology cover of~\eqref{eq.phi} with
$g=2$. 
All simple, null homologous curves in $S$ 1-lift to nonseparating curves in 
$\tilde{S}$. Moreover, no curve in a primitive homology class in $S$ 
1-lifts. 
\end{lem}

\begin{proof}
To start off with, the fact that null homologous curves 1-lift is a consequence of the fact that the cover has Abelian deck transformation group. \\

In Figure \ref{cube}, the black curves are lifts of simple null homologous curves from $S$. These black curves all 1-lift to non-separating curves in $\tilde{S}$. The covering is characteristic,
so this observation is true independently of the choice of basis $\{a_{1}, b_{1},a_{2}, b_{2}\}$ from Equation \eqref{presentations}. It follows that all simple null homologous curves 1-lift to simple, non-separating curves in $\tilde{S}$. An analogous argument shows this is also true for $m>3$.
\end{proof}

\subsection{The homology of a homology covering}
\label{relations}

In this subsection, we will assume that $S$ is a closed surface of genus 2.
It will be useful to describe the homology covers of a surface $S$
using a fixed presentation 
\begin{equation}
\label{presentations}
\pi_1(S) = \langle a_1, b_1, a_{1}, b_{2} \,\, | \,\,
[a_{1},b_{1}] [a_{2},b_{2}]\rangle
\end{equation}
where $a_i,b_i$ are curves representing a usual symplectic 
basis for $H_{1}(S;\mathbb{Z})$, satisfying 
$i(a_{i}, a_{j})=i(b_{i}, b_{j})=0$ and $i(a_{i},b_{j})=\delta^{i}_{j}$.\\

The homomorphism~\eqref{eq.phi} is given explicitly by
\begin{equation}\label{thecover}
\begin{split}
a_{1}\mapsto (1,0,0,0)\\
b_{1}\mapsto  (0,1,0,0)\\
a_{2}\mapsto (0,0,1,0)\\
b_{2}\mapsto  (0,0,0,1)
\end{split}
\end{equation}

It will now be shown how to use the relations of the deck transformation group to obtain a generating set for $H_{1}(\tilde{S},\mathbb{Z})$. This subsection is only needed to show the necessity of the assumption $m\geq 3$ in the construction later on.  \\

Suppose now that $D$ is any group for which there is the short exact sequence
\begin{equation}
\label{phi}
1\rightarrow \pi_{1}(\tilde{S}) \rightarrow \pi_{1}(S) 
\xrightarrow{\phi} D \rightarrow 1
\end{equation}
for some covering space $\tilde{S}$. Let $\{g_{1}, \ldots, g_{n}\}$ be a set of elements of $\pi_{1}(S)$ whose image under $\phi$ is a generating set for $D$. Let $r=w(g_{1}, \ldots, g_{n})$ be a word in the elements $\{g_{1}, \ldots, g_{n}\}$ that is mapped to the identity by $\phi$, i.e. $w(\phi(g_{1}), \ldots, \phi(g_{n}))=I\in D$. The word $r$ could be either nontrivial in $\pi_{1}(S)$, or it could be a product of conjugates of the relation $[a_{1},b_{1}][a_{2},b_{2}]$ in $\pi_{1}(S)$.\\

Note that $\{\phi(a_{1}), \phi(a_{2}), \phi(b_{1}), \phi(b_{2})\}$ is a generating set for $D$, where $\{a_{1}, a_{2}, b_{1}, b_{2}\}$ is the choice of generating set from Equation \eqref{presentations}. Due to the assumption that the genus of $S$ is two, $D$ can always be generated by four generators.\\

If $r=r(g_1,\ldots,g_n)$ is a word in $g_1,\ldots,g_n$, then 
$\phi(r)=r(\phi(g_1),\ldots,\phi(g_n))$.
A set of words $\{r_{1}(g_{1}, \ldots, g_{n}), 
r_{2}(g_{1}, \ldots, g_{n}), \ldots, r_{k}(g_{1}, \ldots, g_{n})\}$ is 
a complete set of relations for $D$ if 
\begin{equation*}
\{\phi(g_{1}), \ldots, \phi(g_{n})\, \mid \, \phi(r_1),\ldots, \phi(r_k)\}
\end{equation*}
is a presentation for $D$.\\

The next lemma is a corollary of a presumably well known group theoretic statement. As the author could not find a reference, a proof is given here for the sake of completeness.

\begin{lem}
\label{generatingset}
Suppose $D$ can be generated by no fewer than four generators. Let $\{r_{1}, r_{2}, \ldots, r_{k}\}$ be a set of words in $\pi_{1}(S)$ mapping to a complete set of relations for $D$. Then the set of homology classes of connected components of the pre-images of the curves representing the words $r_{1}, r_{2}, \ldots, r_{k}$ is a generating set for $H_{1}(\tilde{S};\mathbb{Z})$.
\end{lem}

\begin{proof}
Consider a presentation for $D$ given by
\begin{equation*}
\{\phi(a_{1}), \phi(a_{2}), \phi(b_{1}), \phi(b_{2}) \, \mid \,
\phi(r_{1}^{'}), \phi(r_{2}^{'}), \ldots, \phi(r_{k}^{'})\}
\end{equation*}
From the exact sequence \eqref{phi}, we see that each of the $r_{i}^{'}$ represents an element of $\pi_{1}(\tilde{S})$.\\

When the image of $\{r_{1}^{'}, r_{2}^{'}, \ldots, r_{k}^{'}\}$ under $\phi$ is a complete set of relations for $D$, it follows that any element of $\pi_{1}(\tilde{S})$ is a product of conjugates of elements of the set $\{r_{1}, r_{2}, \ldots, r_{k}\}$. Let $c$ be a loop representing the element $r_{i}^{'}$. The connected components of $p^{-1}(c)$ correspond to conjugates of $r_{i}^{'}$. Therefore, the connected components of the pre-images of the closed curves represented by the words $\{r_{1}^{'}, r_{2}^{'}, \ldots, r_{k}^{'}\}$ are a generating set for $H_{1}(\tilde{S},\mathbb{Z})$.\\

We now use the assumption that $D$ has no fewer than four generators to show that this is true for any presentation of $D$. Another presentation for $D$ can be written as follows
\begin{equation*}
\{w_{1}, w_{2}, w_{3}, w_{4}\, \mid \, \phi(r_{1}), \phi(r_{2}), \ldots, 
\phi(r_{m})\}
\end{equation*}
where $w_{1}, w_{2}, w_{3}$ and $w_{4}$ are words in $\phi(a_{1})$, $\phi(a_{2})$, $\phi(b_{1})$ and $\phi(b_{2})$, and $r_{1}, r_{2}, \ldots, r_{m}$ are products of conjugates of $r_{1}^{'}, r_{2}^{'}, \ldots, r_{k}^{'}$. For the same reason as before, the connected components of the pre-images of the closed curves in $S$ represented by the words $\{r_{1}, r_{2}, \ldots, r_{m}\}$ are a generating set for $H_{1}(\tilde{S};\mathbb{Z})$. 
\end{proof}

\begin{rem}
Note that the last sentence of the proof is not necessarily true for a proper subset of $\{r_{1}, r_{2}, \ldots, r_{m}\}$. If one or more of $w_{1}, w_{2}, w_{3}$ or $w_{4}$ is mapped to the identity in $D$, there is a presentation for $D$ with generators consisting of a proper subset of $\{w_{1}, w_{2}, w_{3}, w_{4}\}$ and relations consisting of a proper subset of $\{\phi(r_{1}), \phi(r_{2}), \ldots, \phi(r_{m})\}$. However, the assumption that $D$ is generated by no fewer than four elements rules out the possibility of a presentation for $D$ with relations consisting of a proper subset of $\{\phi(r_{1}), \phi(r_{2}), \ldots, \phi(r_{m})\}$.
\end{rem}

To use Lemma \ref{generatingset}, a complete set of relations for $D$ is needed. To start off with, there are the relations $\phi^{m}(a_{i})=1$ and $\phi^{m}(b_{i})=1$. These relations correspond to the submodule of $H_{1}(\tilde{S};\mathbb{Z})$ spanned by connected components of pre-images of the generators. In Figure \ref{cube} with $m=2$, some examples are drawn in red. Other relations are, for example, commutation relations or the relations stating that the remaining group elements have order $m$ in the deck transformation group. When $m=2$, the commutation relations are a consequence of the relations stating that all 16 group elements have order $m$. For example,
\begin{align*}
\phi(a_{1})\phi(b_{1})
&=(\phi(a_{1})\phi(b_{1}))^{-1} \qquad
\text{ since }\phi(a_{1})\phi(b_{1}) \text{ has order 2}\\
&=\phi(b_{1})^{-1}\phi(a_{1})^{-1}\\
&=\phi(b_{1})\phi(a_{1}) \qquad\quad\,\,\,\, 
\text{ since }\phi(a_{1})\text{ and }\phi(b_{1})\text{ each have order 2}
\end{align*}

It will be shown later that this is a peculiarity of $m=2$; as shown in Lemma \ref{secondthought}, for $m>2$, we also need commutation relations. For $m=2$, the relations stating that all elements of the deck tranformation group are of order two are a complete set of relations for $D$. By Lemma \ref{generatingset}, this gives us a set of simple, nonseparating curves whose pre-images span $H_{1}(\tilde{S};\mathbb{Z})$. \\

\subsection{Integral versus rational homology}
\label{rationalvsintegral}

Examples for which $sc_{p}(H_{1}(\tilde{S};\mathbb{Z}))$ can not be all of $H_{1}(\tilde{S};\mathbb{Z})$ will now be constructed. This is done by showing that for $m>2$, connected components of pre-images of simple, nonseparating curves do not span $H_{1}(\tilde{S};\mathbb{Z})$; if we want to span $H_{1}(\tilde{S};\mathbb{Z})$ with connected components of pre-images of simple curves, separating curves will also be needed. The promised examples are then obtained by taking the composition of two such covering spaces, using Lemma \ref{nullnonsimple}.

\begin{lem}
In the homology covering space $p:\tilde{S}\rightarrow S$ of~\eqref{eq.phi} 
with $m\geq3$ and $g \geq 2$, connected components of pre-images of simple, 
nonseparating curves do not span $H_{1}(\tilde{S};\mathbb{Z})$.
\label{secondthought}
\end{lem}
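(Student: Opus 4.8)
My plan is to reduce the statement modulo $m$ and extract an obstruction from the $\mathfrak{m}$-adic filtration of the deck group ring. Write $D \cong (\Z/m\Z)^{2g}$ and $A = \mathbb{F}_m[D]$; since $m$ is prime, $A \cong \mathbb{F}_m[s_1,\dots,s_{2g}]/(s_1^m,\dots,s_{2g}^m)$ with $s_j = t_j-1$, a local ring with maximal ideal $\mathfrak{m}=(s_1,\dots,s_{2g})$. Give $S$ its one-vertex CW structure with edges $e_1,\dots,e_{2g}$ (the generators $a_i,b_i$) and one $2$-cell; lifting presents the cellular chain complex of $\tilde{S}$ as free $A$-modules $A \xrightarrow{\partial_2} A^{2g} \xrightarrow{\partial_1} A$, where $\partial_1(e_j)=s_j$ and, by a Fox-calculus computation, $\partial_2(1)=\xi := \sum_i\big((1-t_{2i})e_{2i-1}+(t_{2i-1}-1)e_{2i}\big)$. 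The crux is a dichotomy in $\mathfrak m$-adic order: the $1$-lift of the separating curve $[a_1,b_1]$ is the cycle $c_1 = -s_2 e_1 + s_1 e_2 \in \mathfrak{m}\,A^{2g}$ of order exactly $1$, whereas every lift of a simple nonseparating curve will be shown to lie in $\mathfrak{m}^{m-1}A^{2g}$, hence in $\mathfrak{m}^2 A^{2g}$ as soon as $m\geq 3$.

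This last point rests on primitivity. A simple nonseparating $\gamma$ has primitive homology class, so its image $\bar v\in(\Z/m\Z)^{2g}$ is nonzero, and since $m$ is prime every nonzero element has order exactly $m$; thus $\gamma$ $m$-lifts and a component of $p^{-1}(\gamma)$ is the cycle $N_g z$, where $g=t^{\bar v}=\prod_j t_j^{v_j}$, $z$ is a $1$-chain lifting $\gamma$ with $\partial_1 z = g-1$, and $N_g = 1+g+\dots+g^{m-1}$. Putting $\tau=g-1\in\mathfrak m$ and using the hockey-stick identity $\sum_{k=0}^{m-1}\binom{k}{j}=\binom{m}{j+1}$ with $\binom{m}{j+1}\equiv 0\pmod m$ for $0\le j\le m-2$, one gets $N_g \equiv \tau^{m-1}\pmod m$, so the reduced cycle is $\tau^{m-1}z\in\mathfrak{m}^{m-1}A^{2g}$, and the same holds for every deck translate. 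Now let $q\colon A^{2g}\to (A/\mathfrak{m}^2)\otimes_{\mathbb{F}_m}V$ be coefficient reduction (with $V=\bigoplus_j\mathbb{F}_m e_j$) and $T=\big((A/\mathfrak m^2)\otimes V\big)/\langle q(\xi)\rangle$. Since $\mathfrak{m}\,\xi\subseteq\mathfrak{m}^2 A^{2g}$, the map $q$ descends to a well-defined $\bar q\colon H_1(\tilde{S};\mathbb{F}_m)\to T$. By the above, $\bar q$ annihilates the class of every simple nonseparating lift, while $\bar q([c_1])$ is the class of $-s_2 e_1 + s_1 e_2$, which is not a scalar multiple of $q(\xi)=\sum_i(-s_{2i}e_{2i-1}+s_{2i-1}e_{2i})$ once $g\ge 2$, hence nonzero.

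Finally, $H_1(\tilde{S};\Z)$ is free, so reduction mod $m$ is surjective; if the simple nonseparating lifts spanned $H_1(\tilde{S};\Z)$ their reductions would span $H_1(\tilde{S};\mathbb{F}_m)$ and force $\bar q=0$, contradicting $\bar q([c_1])\neq 0$. Hence they do not span, which is the assertion, and the separating curve $[a_1,b_1]$ supplies a class they miss. I expect the main obstacle to be handling \emph{all} simple nonseparating curves uniformly rather than only the generators $a_i,b_i$: this is precisely where primitivity and the reduction $N_g\equiv\tau^{m-1}$ do the work, promoting the naive order relations to a statement about every nonseparating lift at once. A secondary delicate point is the well-definedness of $\bar q$ on homology, i.e.\ controlling $\mathrm{im}\,\partial_2$ via the quotient by $\langle q(\xi)\rangle$. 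I would also note that the argument visibly collapses at $m=2$, where $m-1=1$ and nonseparating lifts share the $\mathfrak m$-adic order of $c_1$, matching the spanning conclusion already recorded for $m=2$.
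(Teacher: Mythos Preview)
Your argument is correct and takes a genuinely different route from the paper's. The paper argues by contradiction inside $\pi_1(S)$: assuming a lift of $[a_1,b_1]$ is an integral combination of lifts of simple nonseparating $\gamma_i$, it extracts a relation $[a_1,b_1]^{-1}\gamma_1^{m}\cdots\gamma_k^{m}\kappa=1$ with $\kappa\in[\pi_1(\tilde S),\pi_1(\tilde S)]$, and then, via explicit commutator identities (carried out for $m=3$ with the assertion that larger $m$ are analogous), shows $[a_1,b_1]$ would have to lie in the normal closure of the words $[w,a_i^{\pm m}]$, $[w,b_i^{\pm m}]$, $[w,[\pi_1(S),\pi_1(S)]]$; passing to the two–step nilpotent quotient $\pi_1(S)/[\pi_1(S),[\pi_1(S),\pi_1(S)]]$ then yields the contradiction. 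You instead linearise from the outset, working in the $\mathbb{F}_m[D]$–cellular chain complex and the $\mathfrak m$-adic filtration: primitivity of nonseparating classes forces every such lift to carry the norm factor $N_g\equiv(g-1)^{m-1}$, pushing it into $\mathfrak m^{m-1}A^{2g}\subseteq\mathfrak m^{2}A^{2g}$, while the Fox-derivative cycle for $[a_1,b_1]$ sits at filtration level~$1$; the quotient $(A/\mathfrak m^{2})\otimes V\,/\,\mathbb{F}_m\!\cdot q(\xi)$ then separates them. Both proofs exploit the same dichotomy --- nonseparating curves $m$-lift while separating ones $1$-lift --- but your packaging is uniform in $m$ and in $g\ge 2$, and the two points you flagged as delicate (handling \emph{all} simple nonseparating curves via primitivity, and well-definedness of $\bar q$ from $\mathfrak m\,\xi\subset\mathfrak m^{2}A^{2g}$ so that $\mathrm{im}\,\partial_2$ lands in $\mathbb{F}_m\!\cdot q(\xi)$) are indeed the places where the work happens, and they go through as you wrote. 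Your observation that the obstruction collapses at $m=2$ because $\mathfrak m^{m-1}=\mathfrak m$ matches the paper's own remark that its commutator computation degenerates in that case.
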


\begin{proof}
The lemma will be proven for $m=3$ and it is claimed that analogous arguments work for $m>3$.\\

We will show by contradiction 
that $[p^{-1}([a_{1},b_{1}])] \in H_1(\tilde S,\mathbb{Z})$
is not in the span of homology classes of pre-images of simple, 
nonseparating curves of $S$. \\

Suppose a connected component of $p^{-1}([a_{1},b_{1}])$ is in the span of connected components of pre-images of simple, nonseparating curves. 
In the group $\pi_{1}(S)$ there is therefore the relation 
\begin{equation}
\label{relation}
[a_{1},b_{1}]^{-1}\gamma_{1}^{3}\gamma_{2}^{3}\gamma_{3}^{3}\ldots\gamma_{k}^{3}
\kappa=1 \in \pi_1(\tilde S)
\end{equation}
where $\kappa$ is in the subgroup $[\pi_{1}(\tilde{S}), \pi_{1}(\tilde{S})]$ of $\pi_{1}(S)$ and $\gamma_i$ are elements of $\pi_1(S)$ representing 
simple closed curves in $S$. \\

Let $N$ denote the subgroup of $\pi_{1}(S)$ normally generated by words 
of Equation~\eqref{cubesandcomms},
\begin{equation}
\label{cubesandcomms}
[w,a_{i}^{\pm3}]\text{, }[w,b_{i}^{\pm3}]
\text{ for }i\in 1,\ldots,g, \,\, w \in \pi_1(S) \qquad \text{and} \qquad
[w,[\pi_{1}(S),\pi_{1}(S)]], \,\, w \in \pi_1(S) \,.
\end{equation}
and let $N_{2}$ be the quotient 
\begin{equation*}
\frac{\pi_{1}(S)}{[\pi_{1}(S),[\pi_{1}(S), \pi_{1}(S)]]}
\end{equation*}

\begin{claim}
\label{symbolmanipulation}
$\gamma_{1}^{3}\gamma_{2}^{3}\gamma_{3}^{3}\ldots\gamma_{k}^{3} \in N$.
\end{claim}

\begin{proof}(of the Claim)
The product $\gamma_{1}^{3}\gamma_{2}^{3}\gamma_{3}^{3}\ldots\gamma_{k}^{3}$ is null homologous in $S$, so for every generator $a_{i}$ (respectively $b_i$), $i\in \{1,2\}$, the sum of the powers in the product $\gamma_{1}^{3}\gamma_{2}^{3}\gamma_{3}^{3}\ldots\gamma_{k}^{3}$ must be zero. This implies that the elements of the set $\{\gamma_{h}\}$ can not all be elements of the generating set $\{a_{1}, a_{2}, b_{1}, b_{2}\}$. Assume otherwise: then for every $a^{3}_{i}$ $i\in \{1,2\}$, (respectively $b^{3}_{i}$) there must be an $a^{-3}_i$ (respectively $b^{3}_{i}$), in which case $\gamma_{1}^{3}\gamma_{2}^{3}\gamma_{3}^{3}\ldots\gamma_{k}^{3}$ would be in the commutator subgroup of $\pi_{1}(\tilde{S})$. Since Lemma \ref{nullnonsimple} states that a connected component of $p^{-1}([a_{1},b_{1}])$ is nonseparating, this would contradict Equation \eqref{relation}.\\

Suppose 
\begin{equation*}
\gamma_{h}=x_{1}x_{2}\ldots x_{n}:=x_{1}y\text{, where each }x_{j}\in \{a_{1}, a_{2}, b_{1}, b_{2}\}\text{ for }j\in \{1,2,\ldots,n\}.
\end{equation*}

It will now be shown that $\gamma_{h}^{3}$ can be written as a product of cubes of generators, followed by an element of $N$.\\

It follows from the commutator identities $[x,yz]=[x,y][x,z]^{y}$ and $[zx,y]=[x,y]^{z}[z,y]$ that elements of $[\pi_{1}(\tilde{S}),\pi_{1}(\tilde{S})]$ are normally generated by words from Equation \eqref{cubesandcomms}, hence $[\pi_{1}(\tilde{S}), \pi_{1}(\tilde{S}]\in N$.\\

Note that 
\begin{align*}
&\gamma_{h}^{3}=x_{1}yx_{1}yx_{1}y\\
&=x_{1}yx_{1}y^{2}x_{1}[x_{1}^{-1},y^{-1}]\text{, using }x_{1}y=yx_{1}[x_{1}^{-1},y^{-1}]\\
&=x_{1}^{2}y[y^{-1},x_{1}^{-1}]y^{2}x_{1}[x_{1}^{-1},y^{-1}]
\end{align*}
In $N_{2}$, this equals
\begin{align*}
&x_{1}^{2}y^{3}x_{1}[y^{-1}, x_{1}^{-1}][x_{1}^{-1}, y^{-1}]\\
&=x_{1}^{2}y^{3}x_{1}\\
&=x_{1}^{3}y^{3}[y^{-3},x_{1}^{-1}]
\end{align*}

In other words, 
\begin{equation}
\label{moresymbols}
(x_{1}y)^{3}=x_{1}^{3}y^{3}[y^{-3}, x_{1}^{-1}]n
\end{equation}
where $n\in N$.\\

It follows from Equation \ref{moresymbols} and the commutator identity $[zx,y]=[x,y]^{z}[z,y]$ that $[y^{-3}, x_{1}^{-1}]\in N$. This implies that $\gamma_{h}^{3}=x_{1}^{3}y^{3}n_{1}$, where $n_{1}\in N$.\\

Note that rearranging the orders of cubes of generators and elements of $N$ only introduces more elements in $N$. Repeating the argument just given on the shorter word $y$, and rearranging, shows that $\gamma_{h}^{3}$ is a product of cubes of generators, followed by an element of $N$.\\


It is therefore possible to write $\gamma_{1}^{3}\gamma_{2}^{3}\gamma_{3}^{3}\ldots\gamma_{k}^{3}$ as a product of cubes of generators, and elements of $N$. Again, rearranging the orders of cubes of generators and elements of $N$ only introduces more elements of $N$. It follows that $\gamma_{1}^{3}\gamma_{2}^{3}\gamma_{3}^{3}\ldots\gamma_{k}^{3}$ is a product of cubes of generators, and an element of $N$. Also, the products of the cubes of generators must be in $\pi_{1}(\tilde{S})$, by construction. In fact, the product of cubes must be in $[\pi_{1}(\tilde{S}), \pi_{1}(\tilde{S})]$ because Equation \eqref{relation} implies the sum of the powers of any generator in the product must be zero, otherwise $\gamma_{1}^{3}\gamma_{2}^{3}\gamma_{3}^{3}\ldots\gamma_{k}^{3}$ could not be null homologous in $S$. Since $[\pi_{1}(\tilde{S}), \pi_{1}(\tilde{S})]$ is contained in $N$, the claim follows.
\end{proof}

A contradiction will now be obtained by showing that $[a_{1}, b_{1}]$ cannot be contained in $N$.\\

Let $\psi$ be the homomorphism taking an element of $\pi_{1}(S)$ to its coset in $\pi_{1}(S)/N$, and $\psi_1$ be the homomorphism taking an element of $\pi_{1}(S)$ to its coset in $\pi_{1}(S)/[\pi_{1}(S),[\pi_{1}(S),\pi_{1}(S)]]$. We now compute the image of $[\pi_{1}(S), \pi_{1}(S)]$ under $\psi$.\\

It follows from the commutator identities $[x,yz]=[x,y][x,z]^{y}$ and $[zx,y]=[x,y]^{z}[z,y]$ that $[\pi_{1}(S),\pi_{1}(S)]$ is generated by conjugates of commutators of generators. Since $[\pi_{1}(S), \pi_{1}(S)]$ maps to a subgroup in the center of the image of $\psi_{1}$, it follows that $\psi_{1}([\pi_{1}(S), \pi_{1}(S)])$ is generated by the image of commutators of generators of $\pi_{1}(S)$. The group $\psi_{1}([\pi_{1}(S), \pi_{1}(S)])$ is therefore a finitely generated, Abelian group. Note that $[\pi_{1}(S), \pi_{1}(S)]$ is a free group, and $\psi_{1}([\pi_{1}(S), \pi_{1}(S)])$ is its abelianisation, so $\psi_{1}([\pi_{1}(S), \pi_{1}(S)])$ can not be the trivial group.\\

Again using the commutator identities, it follows that $\psi_{1}([w,a_{i}^{3}])=\psi_{1}([w,a_{i}]^{3})$. Similarly for $\psi_{1}([w,b_{i}^{3}])$. If the word $w$ is not a generator, it follows from the commutator identities and the fact that the image of the commutator subgroup under $\psi_1$ is in the center, that $\psi_{1}([w,a_{i}]^{3})$ can be written as a product of cubes of commutators of generators. Since the word $w$ can be taken to be any of the four generators, it follows that the image of $[\pi_{1}(S),\pi_{1}(S)]$ under $\psi$ is a finitely generated Abelian group, each element of which has order three. In particular, $\psi([a_{1},b_{1}])$ is not the identity. This proves the promised contradiction from which the lemma follows.
\end{proof}

\begin{rem}
The argument in Lemma \ref{secondthought} does not work when $m=2$. This is because in this case we do not get any commutators of commutators in the expression for $A$, hence there is no contradiction to the existence of Equation \eqref{relation}.
\end{rem}

It follows from arguments of Boggi-Looijenga, \cite{Boggiemail} and \cite{Looijenga}, that when $D$ is Abelian, $H_{1}(\tilde{S};\mathbb{Q})$ is generated by homology classes of lifts of simple, nonseparating curves. In Figure \ref{holytorus}, for example, it is not hard to see that pairs of connected components of pre-images of a simple null homologous curve $n$ are in the span of connected components of pre-images of generators. When $m>3$, $m[\tilde{n}]$ is in the integral span of homology classes of connected components of lifts of simple, nonseparating curves. Hence $[\tilde{n}]$ is in the rational span, but not, as shown in Lemma \ref{secondthought}, in the integral span.

\subsection{Proof of Theorem~\ref{thm.1}}
\label{sub:non_example}
The promised families of examples for which lifts of simple curves do not span the integral homology of the covering space will now be constructed. The
examples are iterated homology coverings of $S$, with at least two
iterations, where the last homology covering uses an integer $m \geq 3$.\\

Let $\tilde{S}\rightarrow S$ be the covering with $m\geq2$ just studied. Repeat the same construction, only with larger genus, and $m>2$, on $\tilde{S}$ to obtain a cover $\tilde{\tilde{S}}\rightarrow S$ factoring through $\tilde{S}$. That the result is a regular cover follows from the fact that it is a composition of two characteristic covers.\\

It is possible to see almost immediately that $sc_{p}(H_{1}(\tilde{\tilde{S}},\mathbb{Z}))$ can not be all of $H_{1}(\tilde{\tilde{S}},\mathbb{Z})$. In Lemma \ref{secondthought} we saw that 1-lifts of simple null homologous curves from $\tilde{S}$ were needed to generate $H_{1}(\tilde{\tilde{S}}; \mathbb{Z})$. However, by Lemma \ref{nullnonsimple}, no simple null homologous curves in $\tilde{S}$ project onto simple curves in $S$.\\

\bibliography{scpbib}
\bibliographystyle{plain}
\end{document}